\DeclareFontFamily{OT1}{pzc}{}
\DeclareFontShape{OT1}{pzc}{m}{it}{<-> s * [1.10] pzcmi7t}{}
\DeclareMathAlphabet{\mathpzc}{OT1}{pzc}{m}{it}
\DeclareMathAlphabet{\mathcalligra}{T1}{calligra}{m}{n}
\let\baraccent=\= 
\renewcommand{\=}[1]{\stackrel{#1}{=}} 
\newcommand{\bs}[1]{\ensuremath{\boldsymbol{#1}}}
\numberwithin{equation}{section}
\definecolor{refkey}{rgb}{0.9451,0.2706,0.4941}
\definecolor{labelkey}{rgb}{0.9451,0.2706,0.4941}
\newtheorem{theorem}{Theorem}[section]
\newtheorem*{theorem*}{Theorem}
\theoremstyle{definition}
\newtheoremstyle{myremstyle}
  {\topsep} 
  {1.2\topsep} 
  {} 
  {} 
  {\itshape} 
  {.} 
  {.5em} 
  {} 
\theoremstyle{myremstyle} 
\newcommand{\Addressesone}{{
		\bigskip
		\footnotesize
		\textsc{Y. Kazashi}\par\nopagebreak
		\textsc{Mathematics Institute, CSQI}\par\nopagebreak
		\textsc{\'{E}cole Polytechnique F\'{e}d\'{e}rale de Lausanne}
		\textsc{University of New South Wales}\par\nopagebreak
		\textsc{CH-1015 Lausanne, Switzerland}\vspace{1pt}\par\nopagebreak
		\textit{E-mail}: \texttt{\href{mailto:yoshihito.kazashi@epfl.ch}{yoshihito.kazashi@epfl.ch}}
}}
\newcommand{\Addressestwo}{{
	\bigskip
  \footnotesize
  \textsc{F.\ Y. Kuo}\par\nopagebreak
  \textsc{School of Mathematics and Statistics}\par\nopagebreak
  \textsc{University of New South Wales}\par\nopagebreak
  \textsc{Sydney NSW 2052, Australia}\vspace{1pt}\par\nopagebreak
  \textit{E-mail}: \texttt{\href{mailto:f.kuo@unsw.edu.au}{f.kuo@unsw.edu.au}}
}
}
\newcommand{\Addressesthree}{{
		\bigskip
		\footnotesize
		\textsc{I.\ H. Sloan}\par\nopagebreak
		\textsc{School of Mathematics and Statistics}\par\nopagebreak
		\textsc{University of New South Wales}\par\nopagebreak
		\textsc{Sydney NSW 2052, Australia}\vspace{1pt}\par\nopagebreak
		\textit{E-mail}: \texttt{\href{mailto:i.sloan@unsw.edu.au}{i.sloan@unsw.edu.au}}
}}
\author{Yoshihito Kazashi, Frances Y.\ Kuo, Ian H.\ Sloan}
\title{Derandomised lattice rules for high dimensional integration}
\date{}
\algnewcommand\INPUT{\item[\textbf{Input:}]}%
\algnewcommand\OUTPUT{\item[\textbf{Output:}]}%
\theoremstyle{definition}
\definecolor{refkey}{rgb}{0.9451,0.2706,0.4941}\definecolor{labelkey}{rgb}{0.9451,0.2706,0.4941}
\definecolor{darkred}{RGB}{139,0,0}
\definecolor{darkgreen}{RGB}{0,100,0}
\definecolor{darkmagenta}{RGB}{139,0,139}
\newcommand{\setu}{{\mathrm{\mathfrak{u}}}}
\newcommand{\bsx}{{\boldsymbol{x}}}
\newcommand{\bsz}{{\boldsymbol{z}}}
\newcommand{\bsDelta}{\boldsymbol{\Delta}}
\newcommand{\bsgamma}{{\boldsymbol{\gamma}}}
\newcommand{\rd}{{\mathrm{d}}}
\begin{document}
	
	\maketitle
	
	\begin{abstract}
		We seek shifted lattice rules that are good for high dimensional
		integration over the unit cube in the setting of an unanchored weighted
		{Sobolev} space of functions with square-integrable mixed first
		derivatives. Many existing studies rely on random shifting of the lattice,
		whereas here we work with lattice rules with a deterministic shift.
		Specifically, we consider ``half-shifted'' rules, in which each component
		of the shift is an odd multiple of $1/(2N)$, where $N$ is the number of
		points in the lattice. We show, by applying the principle that \emph{there
			is always at least one choice as good as the average}, that for a given
		generating vector there exists a half-shifted rule whose squared
		worst-case error differs from the shift-averaged squared worst-case error
		by a term of order only ${1/N^2}$. Numerical experiments, in which the
		generating vector is chosen component-by-component (CBC) as for randomly
		shifted lattices and then the shift by a new ``CBC for shift'' algorithm,
		yield encouraging results.
	\end{abstract}
	
	
	
	\section{Introduction}
	Lattice rules are often used for high dimensional integration over the
	unit cube, that is, for the numerical evaluation of the $s$-dimensional
	integral
	\begin{equation*}
		I_s(f)
		:= \int_0^1 \cdots \int_0^1 f(x_1,\ldots, x_{s}) \,\rd x_1 \cdots\rd x_{s}
		=\int_{[0,1]^s} f(\bsx)\,\rd \bsx.
	\end{equation*}
	A \emph{shifted lattice rule} for the approximation of the integral is an
	equal weight cubature rule of the form
	\begin{equation}\label{eq:shifted}
	Q_{N,s}(\bsz,\bsDelta;f)
	:= \frac{1}{N} \sum_{k=1}^{N} f\bigg(\bigg\{\frac{{k}\bsz}{N}+\bsDelta\bigg\}\bigg),
	\end{equation}
	where $\bsz\in\{1,\ldots,N-1\}^{{s}}$ is the \emph{generating vector},
	$\bsDelta\in[0,1]^s$ is the \emph{shift}, while the braces around an
	$s$-vector indicate that each component of the vector is to be replaced by
	its fractional part in $[0,1)$. The special case $\bsDelta = \bs{0}$
	yields the unshifted lattice rule, which provably works well for periodic
	functions \cite{Sloan.I.H_Joe_1994_book}. If the integrand is not
	periodic, the shift can play a useful role. The implementation of a
	shifted lattice rule is relatively easy once the vectors $\bsz$ and
	$\bsDelta$ are prescribed, even when $s$ is very large, say, in the tens
	of thousands.
	
	The central concern of this paper is the construction of a good shift
	vector~$\bsDelta$, given a specific choice of a good $\bsz$.

	At the present time the overwhelmingly favored method for fixing the
	shifts in a non-periodic setting is to choose them \emph{randomly}. In a
	\emph{randomly shifted lattice rule} the shift $\bsDelta$ is chosen from a
	uniform distribution on $[0,1]^s$, and the integral is approximated by an
	empirical estimate of the expected value, $\frac{1}{q}\sum_{i=1}^q
	Q_{N,s}(\bsz,\bsDelta_i;f)$, where $q$ is some fixed number, and
	$\bsDelta_1,\ldots, \bsDelta_q\in [0,1]^s$ are $q$ independent samples
	from the uniform distribution on ${[0,1]^s}$. With the shift chosen
	randomly, all that remains in the randomly shifted case is to construct
	the integer vector $\bsz$, which can be done very effectively by using the
	\emph{component-by-component} {(CBC)} construction to yield a vector
	{$\bsz^*$} that gives a satisfactorily small value of the
	\emph{shift-averaged worst-case error}.
	
	In the present paper we construct a new kind of shifted lattice rule,
	which is \emph{derandomised} in the sense that the generating vector is
	the same $\bsz^*$ determined by the CBC algorithm for the shift-averaged
	worst-case error, while the shift $\bsDelta^*$ is determined by a new CBC
	construction, ``\emph{CBC for shift}'': the components of the shift vector
	is obtained one at a time, chosen from the \emph{odd multiples of
		$1/(2N)$}.
	
	We argue that there is a significant potential cost saving in this
	deterministic alternative, in that it becomes no longer necessary to
	compute an empirical average over shifts. In many applications there is
	not just a single integral to be evaluated, but rather many such integrals
	with different input parameters.  In such a situation it may be seen as
	overkill to compute an error estimate for every single integral.  If at
	any stage an error estimate is needed by a user of the present algorithm,
	then all that is needed is to replace the computed shift $\bsDelta^*$ by
	$q$ randomly chosen shifts, in the knowledge that the resulting error
	estimate is just that of the CBC-constructed randomly shifted lattice
	rule.
	
	Approaches to estimating the error for lattice rules for non-periodic
	functions without randomisation include
	\cite{Dick.J_etal_2014_tent,Goda.T_SY_2017_tent}, where a mapping called
	the {\emph{tent transform}} was applied to the lattice rule. In this
	paper, however, no transformation of the lattice points is considered.
	
	\subsection{Function spaces and worst-case errors}
	
	The central element in any CBC construction is the \emph{worst-case
		error}, which for the case of the  shifted lattice rule~\eqref{eq:shifted}
	and a Hilbert space $H_s$ may be defined by
	\[
	e_{N,s}(\bsz,\bsDelta)
	:= {\sup_{f\in H_s, \|f\|_{H_s} \le 1}} |Q_{N,s}(\bsz,\bsDelta;f)-I_s(f)|.
	\]
	Here we consider a weighted \emph{unanchored} {Sobolev} space of functions
	with square-integrable mixed first derivatives on $(0,1)^s$, with squared
	norm
	\begin{align*}
		& \|f\|_{H_{s,\bsgamma}}^2
		\,:=\,
		&
		\sum_{\setu\subseteq\{1:s\}}
		\gamma_\setu^{-1}
		\int_{[0,1]^{|\setu|}}
		\left(\int_{[0,1]^{s-|\setu|}}
		\frac{\partial^{|\setu|}f}{\partial \bsx_\setu}(\bsx_\setu;\bsx_{\{1:s\}\setminus\setu})
		\,\rd\bsx_{\{1:s\}\setminus\setu}
		\right)^2
		\rd\bsx_\setu,
	\end{align*}
	where $\{1:s\} = \{1,2,\ldots,s\}$, $\bsgamma_\setu$ is a positive number
	{which} is the ``weight'' corresponding to the subset $\setu\subseteq
	\{1:s\}$, with $\bsgamma_\emptyset = 1$, and $\bsx_\setu$ denotes the
	variables $x_j$ for $j\in \setu$. It is well known that suitably decaying
	weights are essential if we are to have error bounds independent of
	dimension \cite{Sloan.I.H_Wozniakowski_1998_iff}. The squared worst-case
	error has an explicit formula (see e.g., \cite{Dick.J_etal_2013_Acta})
	\begin{equation}\label{eq:wce-formula}
	e_{N,s}^{2}(\bsz,\bsDelta)
	= {\frac{1}{N^{2}}\sum_{k=1}^{N} \sum_{k'=1}^{N}
		\sum_{{\emptyset \neq } \setu \subseteq \{1:s\}} \!\!\gamma_{\setu}\prod_{j\in \setu}
		\bigg[
		\frac{1}{2} B_{2}\bigg(\bigg\{\frac{(k-k')z_{j}}{N}\bigg\} \bigg)
		+ A_{k,k',z_j}(\Delta_j) \bigg],}
	\end{equation}
	where
	\[
	A_{k,k',z}(\Delta) :=
	\bigg(\bigg\{\frac{kz}{N}+\Delta\bigg\}-\frac{1}{2}\bigg)
	\bigg(\bigg\{\frac{k'z}{N}+\Delta\bigg\}-\frac{1}{2}\bigg).
	\]
	
	For the randomly shifted lattice rule the relevant form of the worst-case
	error is the \emph{shift-averaged worst-case error} $e_{N,s}^{{\rm
			sh}}(\bsz)$, defined by
	\begin{align} \label{eq:e-sh-sq}
		{[e_{N,s}^{{\rm sh}}(\bsz)]^2}
		:=\, \int_{[0,1]^s}e_{N,s}^{2}(\bsz,\bsDelta) \,\rd\bsDelta
		=\, {\frac{1}{N}\sum_{k=1}^{N}
			\sum_{\emptyset \neq \setu \subseteq \{1:s\}} \!\!\gamma_{\setu}\prod_{j\in \setu}
			B_{2}\bigg(\bigg\{\frac{kz_{j}}{N}\bigg\}\bigg)},
	\end{align}
	which is the expected value of the squared worst-case error, taken with
	respect to the random shift. Notice that the double sum over $k,k'$ in
	\eqref{eq:wce-formula} simplified to a single sum over $k$ in
	\eqref{eq:e-sh-sq}.
	\subsection{CBC constructions}
	The principle of a CBC construction is that{,} at stage $j$, one
	determines the $j$th component of the cubature points by seeking to
	minimise {an error criterion} for the $j$-dimensional problem; then with
	that component fixed one moves on to the next component, never going back.
	
	In the case of randomly shifted lattice rules, $z_1^*$ is chosen to have
	the value $1$, and then, for $j=1, 2, \ldots, s-1$, once
	$z_1^*,z_2^*,\ldots, z_j^*$ are fixed, {$z_{j+1}$} is chosen to be the
	element from $\{1, \ldots, N-1\}$ that gives the smallest value of
	$[e_{N,j+1}^{{\rm sh}}(z_1^*, \ldots, z_j^*, z_{j+1})]^2$. The cost of the
	CBC algorithm for constructing $\bsz^*$ up to $s$ dimensions is of order
	$s\,N\log N$ using FFT {\cite{Nuyens.D_Cools_2006_FFT_prime}}, in the
	simplest case of ``product weights'', in which there is only one sequence
	$\gamma_1,\gamma_2,\ldots,\gamma_s$ of weight parameters, and the value of
	$\gamma_\setu$ is taken to be the product $\prod_{j\in\setu} \gamma_j$. In
	this case the sum over $\setu$ in \eqref{eq:e-sh-sq} can be rewritten as a
	product of $s$ factors.
	
	The proven quality of the CBC construction for randomly shifted lattice
	rules is very good, in the sense that, with $\zeta$ being the Riemann zeta
	function and $\varphi$ being the Euler totient function, for all
	$\lambda\in (\frac{1}{2},1]$,
	\begin{equation}\label{eq:error_kuo}
	e_{N,s}^{{\rm sh}}(\bsz^*) \le
	\Bigg(
	{\frac{1}{\varphi(N)}\sum_{\emptyset\ne\setu\subseteq\{1:s\}}
		\gamma_\setu^\lambda \left(\frac{2\zeta(2\lambda)}{(2\pi^2)^\lambda}\right)^{|\setu|}}
	\Bigg)^{1/(2\lambda)},
	\end{equation}
	see e.g., \cite{Dick.J_etal_2013_Acta}. It follows from the definition
	that for $f\in H_{{s,\bsgamma}}$ one has as an  error bound for the
	{randomly shifted} lattice rule constructed by CBC
	\begin{equation*}
		\sqrt{\mathbb{E}[|Q_{N,s}(\bsz^*{,\cdot;}f)-I_s(f)|^2]}
		\le \!\Bigg(
		{\frac{1}{\varphi(N)}\sum_{\emptyset\ne\setu\subseteq\{1:s\}}
			\!\!\gamma_\setu^\lambda \left(\frac{2\zeta(2\lambda)}{(2\pi^2)^\lambda}\right)^{|\setu|}}
		\Bigg)^{1/(2\lambda)}
		\!\!\|f\|_{H_{{s,\bsgamma}}}.
	\end{equation*}
	When $N$ is prime we have $\varphi(N) = N-1$. Thus the convergence rate is
	arbitrarily close to $1/N$ as $\lambda\to 1/2$, but with a constant that
	blows up as $\lambda\to 1/2$ because $\zeta(2\lambda)\to \infty$.
	
	For our new derandomised lattice rule we take the components of the
	generating vector to be $z_1^*, z_2^*,\ldots, z_s^*$ as determined by the
	CBC algorithm for randomly shifted lattice rules.  We then determine the
	components of the shift by a new \emph{CBC for {shift}} algorithm, in
	which at stage $j$, with $\Delta_1^*, \ldots, \Delta_j^*$ already fixed,
	we choose $\Delta_{j+1}$ by minimising the {squared worst-case error
		$e_{N,j+1}^2((z_1^*, \ldots, z_j^*,z_{j+1}^*),(\Delta_1^*, \ldots,
		\Delta_j^*,\Delta_{j+1}))$}.  Of course it is not possible to check all
	real numbers in $[0,1)$ as values of $\Delta_1, \ldots, {\Delta_s}$. We
	argue that it is sufficient to restrict the set of possible shift
	components to the odd multiples of {$1/(2N)$}, that is, to the $N$
	values $S_N := \{1/(2N), 3/(2N), \ldots, (2N-1)/(2N)\}$.
	
	Our argument for the sufficiency of restricting the search over shifts to
	the odd multiples of $1/(2N)$ is given in Theorem~\ref{thm} below, in
	which we show that for any choice of generating vector $\bsz$, the average
	of the squared worst-case error over all shifts in $[0,1]^s$ differs from
	the average over the discrete set $S_N^s$ by a term of order only $1/N^2$.
	
	The restriction from the continuous interval $[0,1]$ to the discrete set
	$S_N$ for the shift has previously been considered in
	\cite{Sloan.I.H_KJ_2002a_step} in a different CBC algorithm which
	constructs the components of $\bsz$ and $\bsDelta$ simultaneously, in the
	order of $z_1, \Delta_1,z_2,\Delta_2,\ldots$.
	
	Now we discuss the error with respect to the shift $\bsDelta^*$ obtained
	by the present CBC for shift algorithm. Let us define the ratio:
	\begin{equation}
	\kappa(N,s):=\frac{e_{N,s}(\bsz^*,\bsDelta^*)}{{e_{N,s}^{\rm{sh}}(\bsz^*)}}.
	\label{eq:ratio}
	\end{equation}
	Then, from the definition of the worst-case error {and} using
	\eqref{eq:error_kuo} we have the the following error bound for the present
	CBC algorithm:
	\begin{align*}
		&|Q_{N,s}(\bsz^*,\bsDelta^*;f)-I_s(f)|
		\le \kappa(N,s)\, e_{N,s}^{{\rm sh}}(\bsz^*)\,\|f\|_{H_{{s,\bsgamma}}}\\
		&\qquad\qquad\qquad\le \kappa(N,s)
		\Bigg(
		{\frac{1}{\varphi(N)}\sum_{\emptyset\ne\setu\subseteq\{1:s\}}
			\!\!\gamma_\setu^\lambda
			\bigg(
			\frac{2\zeta(2\lambda)}{(2\pi^2)^\lambda}
			\bigg)^{|\setu|}}
		\Bigg)^{1/(2\lambda)}
		\!\!\|f\|_{H_{{s,\bsgamma}}},
	\end{align*}
	for all $\lambda\in (1/2,1]$, which is an \emph{explicit} and
	\emph{deterministic} error bound in which in any practical situation
	$\kappa(N,s)$ is a known constant. Numerical experiments in
	Section~\ref{sec:CBC-for-shifts} suggest that $\kappa(N,s)$ can often be
	smaller than $1$, making the derandomised option attractive in practice.
	
	It should be said the presented CBC for shift algorithm is expensive: the
	cost of a single evaluation of the worst-case error \eqref{eq:wce-formula}
	is of order $sN^2$ in the simplest case of product weights, and therefore
	the cost of a search over $N$ values of the shift up to dimension $s$ is
	of order $sN^3$ (if we store the products during the search). But the cost
	is an off-line cost, since spare computing capacity can be used to
	complement existing CBC vectors $\bsz^*$ for randomly shifted lattice
	rules by deterministic shifts $\bsDelta^*$ generated by the CBC for shift
	algorithm.
	
	\section{Error analysis}
	
	In this section, we show that for any choice of generating vector $\bsz$,
	the squared worst-case error with shift averaged over {$S_N^s$},
	defined by
	\begin{equation}
	[e_{N,s}^{\frac{1}{2}\rm{sh}}(\boldsymbol{z})]^2 :=\frac{1}{N^{s}}
	\sum _{\boldsymbol{\Delta}\in S_N^s}
	e^{2}_{N,s}(\boldsymbol{z} ; \boldsymbol{\Delta}),
	\label{eq:e-half-sh-sq}
	\end{equation}
	differs from the average of the squared worst-case error over all shifts
	$[e^{\rm{sh}}_{N,s}(\boldsymbol{z})]^2$ by a term of order only ${1/N^2}$.
	
	\begin{theorem}\label{thm} For
		arbitrary $\bsz\in \{1,\dotsc ,N-1\}^{s}$, with
		$e_{N,s}^{\rm{sh}}(\boldsymbol{z})$ and
		$e_{N,s}^{\frac{1}{2}\rm{sh}}(\bsz)$ as defined in \eqref{eq:e-sh-sq} and
		\eqref{eq:e-half-sh-sq}, respectively, we have
		\begin{equation*}
			\Big|
			[e_{N,s}^{\rm{sh}}(\boldsymbol{z})]^2
			-
			[e_{N,s}^{\frac{1}{2}\rm{sh}}(\boldsymbol{z})]^2
			\Big|
			\leq
			\frac{1}{4N^2}
			\sum_{\emptyset\neq \setu\subseteq\{1:s\}}\gamma_\setu\bigg(\frac{1}{3}\bigg)^{|\mathfrak{u} |} {|\setu|}.
		\end{equation*}
	\end{theorem}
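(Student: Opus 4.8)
The plan is to exploit the product structure of the integrand in the worst-case formula \eqref{eq:wce-formula} together with the fact that both $[0,1]^s$ and $S_N^s$ are Cartesian products, so that averaging the shift factorises coordinate-by-coordinate. Writing $e_{N,s}^2(\bsz,\bsDelta)=\frac{1}{N^2}\sum_{k,k'}\sum_{\emptyset\neq\setu}\gamma_\setu\prod_{j\in\setu}[\frac12 B_2(\{(k-k')z_j/N\})+A_{k,k',z_j}(\Delta_j)]$, each factor depends on a single shift coordinate $\Delta_j$; hence for fixed $k,k'$ and $\setu$ both the continuous average $\int_{[0,1]^s}(\cdot)\,\rd\bsDelta$ and the discrete average $\frac{1}{N^s}\sum_{\bsDelta\in S_N^s}(\cdot)$ pass through the product (the coordinates $j\notin\setu$ contributing trivial factors $1$). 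This reduces the whole problem to comparing one factor averaged over $\Delta\in[0,1]$ against the same factor averaged over $\Delta\in S_N$.

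For the continuous case the Bernoulli convolution identity $\int_0^1 A_{k,k',z}(\Delta)\,\rd\Delta=\frac12 B_2(\{(k-k')z/N\})$ recovers \eqref{eq:e-sh-sq} and shows the per-coordinate continuous factor equals $\beta_j:=B_2(\{(k-k')z_j/N\})$. The heart of the proof is the exact evaluation of the discrete (midpoint) counterpart. First I would observe that, as $\Delta$ runs through $S_N$, the point $\{kz_j/N+\Delta\}$ also runs through $S_N$, so that $\frac1N\sum_{\Delta\in S_N}A_{k,k',z_j}(\Delta)$ becomes a circular autocorrelation $\frac1N\sum_{t\in S_N}(t-\frac12)(\{t+c/N\}-\frac12)$ of a discrete linear ramp, with $c:=((k'-k)z_j)\bmod N$. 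This autocorrelation sums in closed form to $\frac{N^2-1}{12N^2}-\frac{c(N-c)}{2N^2}$; adding the $\frac12 B_2$ contribution and simplifying, the $c$-dependent terms cancel and the per-coordinate discrete factor equals exactly $\beta_j-\frac{1}{12N^2}$. I expect this to be the main obstacle, precisely because the outcome is so rigid: the discrete-minus-continuous discrepancy of a single factor is the \emph{constant} $-\frac{1}{12N^2}$, independent of $k$, $k'$ and $z_j$. Establishing this invariance (rather than a mere $O(1/N^2)$ bound) is what makes the final estimate clean, and it is easy to miscompute the wrap-around terms in the autocorrelation.

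With the single-coordinate computation in hand, the two averages differ only through replacing each factor $\beta_j$ by $\beta_j-\epsilon$ with $\epsilon:=\frac{1}{12N^2}$, so that
\begin{equation*}
[e_{N,s}^{{\rm sh}}(\bsz)]^2-[e_{N,s}^{\frac{1}{2}{\rm sh}}(\bsz)]^2
=\frac{1}{N^2}\sum_{k=1}^N\sum_{k'=1}^N\sum_{\emptyset\neq\setu}\gamma_\setu\Bigl(\prod_{j\in\setu}\beta_j-\prod_{j\in\setu}(\beta_j-\epsilon)\Bigr).
\end{equation*}
Expanding $\prod_{j\in\setu}(\beta_j-\epsilon)=\sum_{\setv\subseteq\setu}(-\epsilon)^{|\setv|}\prod_{j\in\setu\setminus\setv}\beta_j$, the $\setv=\emptyset$ term cancels, and using $|\beta_j|=|B_2(\{\cdot\})|\le\frac16$ with the triangle inequality bounds the inner difference by $(\frac16+\epsilon)^{|\setu|}-(\frac16)^{|\setu|}$, uniformly in $k,k'$; since $\frac{1}{N^2}\sum_{k,k'}1=1$, the double sum contributes only this factor. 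It then remains to show, for each $n=|\setu|\ge1$, the elementary inequality $(\frac16+\epsilon)^{n}-(\frac16)^{n}\le\frac{n}{4N^2}(\frac13)^{n}$, which I would obtain from the mean value theorem (the left side equals $n\xi^{n-1}\epsilon$ with $\xi<\frac14$, as $\epsilon\le\frac1{12}$) followed by $(4/3)^{n}\ge 4/3$. Summing over $\setu$ with weights $\gamma_\setu$ yields exactly the stated bound, the singleton terms $|\setu|=1$ showing it is essentially sharp.
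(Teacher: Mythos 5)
Your proof is correct, and its overall architecture --- reduce to a per-coordinate comparison of the continuous and discrete shift averages, show the per-coordinate discrepancy is exactly $\tfrac{1}{12N^2}$, then control the difference of products by a term linear in $|\setu|$ --- matches the paper's. The two key steps are, however, established by genuinely different means. For the per-coordinate discrepancy, the paper observes that $\int_0^1 A_{k,k',z_j}(\Delta)\,\rd\Delta-\tfrac1N\sum_{m=1}^N A_{k,k',z_j}\big(\tfrac{2m-1}{2N}\big)$ is the error of a composite midpoint rule applied to a function that is piecewise quadratic with constant second derivative $2$ on each interval $[\tfrac{m-1}{N},\tfrac{m}{N})$ (the piecewise-linear parts being integrated exactly), giving $\tfrac{2}{24}\tfrac{1}{N^3}$ per subinterval and hence $\tfrac{1}{12N^2}$ in total; you instead evaluate the discrete sum in closed form as a circular autocorrelation over $S_N$. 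I checked your formula $\tfrac{N^2-1}{12N^2}-\tfrac{c(N-c)}{2N^2}$ and the cancellation of the $c$-dependence against the $B_2$ term: both are right, and your identity yields marginally more information (a discrepancy of definite sign rather than an absolute bound), though the paper's midpoint argument would give the same exactness if pushed, since the midpoint error is exact for a quadratic with constant second derivative. For the product difference, the paper uses the telescoping bound $|\prod_{j\in\setu}a_j-\prod_{j\in\setu}b_j|\le(\tfrac13)^{|\setu|-1}\sum_{j\in\setu}|a_j-b_j|$ with the factor bound $\tfrac13$ obtained from $|\tfrac12 B_2|\le\tfrac1{12}$ and $|A_{k,k',z}|\le\tfrac14$ separately; you expand $\prod_{j\in\setu}(\beta_j-\epsilon)$ binomially and exploit the sharper bound $|\beta_j|=|B_2(\cdot)|\le\tfrac16$ (available because you work with the already-averaged factors), then relax at the last step via $(4/3)^n\ge 4/3$ to recover the stated constant. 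Your route is more computational but delivers a sharper intermediate estimate that you deliberately loosen to match the theorem; the paper's is shorter and avoids the closed-form autocorrelation entirely.
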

	
	\begin{proof}
		We see from \eqref{eq:wce-formula} that
		\begin{align*}
			[e_{N,s}^{\rm sh}(\boldsymbol{z})]^2
			-
			[e_{N,s}^{\frac{1}{2}\rm{sh}}(\boldsymbol{z})]^2
			= \frac{1}{N^{2}}\sum_{k=1}^{N} \sum_{k'=1}^{N}
			\sum_{\emptyset \ne \setu \subseteq \{1:s\}} \gamma_{\setu}
			\left(\prod_{j\in\setu} a_j^{k,k'} - \prod_{j\in\setu} b_j^{k,k'}\right),
		\end{align*}
		where we write for $k,k'=1,\ldots,N$, $j=1,\ldots,s$, $m=1,\ldots,N$,
		\begin{align*}
			&a_j^{k,k'} := c_j^{k,k'} +\int ^{1}_{0} A_{k,k',z_{j}}(\Delta)\,\mathrm{d} \Delta,
			&&b_j^{k,k'} := c_j^{k,k'} + \frac{1}{N}\sum ^{N}_{m =1} A_{k,k',z_{j}}( \mu_{m}), \\
			&c_j^{k,k'} := \frac{1}{2} B_{2}\bigg(\bigg\{\frac{(k-k')z_{j}}{N}\bigg\}\bigg),
			&&\mu_m := \frac{2m-1}{2N}.
		\end{align*}
		Since $|B_2(x)|\le 1/6$ for all $x\in [0,1]$ and $|(x-1/2)(y-1/2)|\le 1/4$
		for all $x,y\in [0,1)$, we have trivially $|a_j^{k,k'}|\le 1/3$ and
		$|b_j^{k,k'}|\le 1/3$. It follows by induction that $|\prod_{j\in\setu} a_{j}^{k,k'}
		-\prod_{j\in\setu} b_{j}^{k,k'}|
		\le
		\big(\frac13\big)^{|\mathfrak{u} |-1}
		\sum _{j\in \mathfrak{u}} |a_{j}^{k,k'} -b_{j}^{k,k'} |$.
		
		We therefore consider the difference
		\begin{align*}
			a_{j}^{k,k'} -b_{j}^{k,k'}
			&= \int_0^1 A_{k,k',z_{j}}(\Delta)\,\rd\Delta
			- \frac{1}{N} \sum_{m=1}^N A_{k,k',z_{j}}(\mu_m) \\
			&= \sum^{N}_{m=1} \bigg(
			\int^{\frac{m}{N}}_{\frac{m-1}{N}} A_{k,k',z_{j}}(\Delta)\,\rd\Delta
			- \frac{1}{N} A_{k,k',z_{j}}\bigg(\frac{2m-1}{2N}\bigg) \bigg),
		\end{align*}
		which is precisely the error of a \emph{composite midpoint rule}
		approximation to the integral of $
		A_{k,k',z_{j}}(\Delta) = \{\frac{kz_j}{N} +\Delta \}\{\frac{k'z_j}{N} +\Delta \}
		- \frac{1}{2}\{\frac{kz_j}{N} +\Delta \} - \frac{1}{2}\{\frac{k'z_j}{N} +\Delta \}
		+ \frac{1}{4}
		$.
		
		Since $\frac{kz_j}{N}$ is a multiple of $\frac{1}{N}$, the function
		$\{\frac{kz_j}{N} +\Delta \}$ is \emph{linear} in $\Delta$ on each
		subinterval $[\frac{m-1}{N},\frac{m}{N})$ of length $\frac{1}{N}$, and so
		the midpoint rule is exact on each subinterval. The same conclusion holds
		for $\{\frac{k'z_j}{N} +\Delta \}$.
		
		On the other hand, the function $\{\frac{kz_j}{N} +\Delta
		\}\{\frac{k'z_j}{N} +\Delta \}$ is \emph{quadratic} in $\Delta$ on each
		subinterval $[\frac{m-1}{N},\frac{m}{N})$, and its second derivative is
		the constant function~$2$, which is uniformly continuous on
		$(\frac{m-1}{N},\frac{m}{N})$ and can be uniquely extended to
		$[\frac{m-1}{N},\frac{m}{N}]$. Therefore, the midpoint rule has error
		bounded by $\frac{2}{24}\frac{1}{N^3}$ on each subinterval, leading to the
		total error $|a_{j}^{k,k'} -b_{j}^{k,k'}|\le \frac{1}{12N^2}$, and in turn
		yielding $|\prod_{j\in\setu} a_{j}^{k,k'} -\prod_{j\in\setu} b_{j}^{k,k'}|
		\le \big(\frac13\big)^{|\mathfrak{u} |-1} \frac{|\setu|}{12N^2}$. This
		completes the proof.
	\end{proof}

	\section{CBC for shift algorithm}\label{sec:CBC-for-shifts}
	
	Theorem~\ref{thm} given in the previous section provides a good motivation to consider the following algorithm.
	\begin{algorithm*}
		\caption{CBC for shift}
		\begin{algorithmic}
			\INPUT $s_{\mathrm{max}}$, $N$, and  ${z}^{*}_{1} ,\dotsc {z}^{*}_{s_{\mathrm{max}}}$, a generating vector obtained by the CBC construction for randomly shifted lattice rules.
			\OUTPUT  shifts $\Delta ^{*}_{1} ,\dotsc ,\Delta
			^{*}_{s_{\mathrm{max}}} \in {S_N}$, and
			\[
			\kappa(N,s)=
			{\frac{e_{N,s}((z_1^*,\ldots,z_s^*),(\Delta_1^*,\ldots,\Delta_s^*))}
				{e_{N,s}^{\rm{sh}}(z_1^*,\ldots,z_s^*)}},
			\qquad s=1,\dots,s_{\max}.\]
			\STATE\textbf{do}
			\[
			\Delta ^{*}_{1} \in \mathrm{argmin}\big\{e^{2}_{N,1}({z}^{*}_{1} {,}\Delta _{1}) \mid \Delta _{1} \in {S_N}\big\},
			\]
			and
			$\kappa(N,1)={e_{N,1}(z_1^*,{\Delta}_1^*)}/{e_{N,1}^{\rm{sh}}(z_1^*)}$
			\FOR{$s$ from $2$ to $s_{\mathrm{max}}$}
			\STATE
			\[\Delta ^{*}_{s} \in
			\mathrm{argmin}\big\{e_{N,s}^{2} {(({z}^{*}_{1} ,\dotsc ,{z}^{*}_{s}),(\Delta ^{*}_{1} ,\dotsc ,\Delta ^{*}_{s-1} ,\Delta _{s}))} \mid \Delta _{s}
			\in {S_N}\big\},
			\]
			and $\kappa(N,s)=
			{e_{N,s}((z_1^*,\ldots,z_s^*),(\Delta_1^*,\ldots,\Delta_s^*))/
				e_{N,s}^{\rm{sh}}(z_1^*,\ldots,z_s^*)}$
			\ENDFOR
		\end{algorithmic}
	\end{algorithm*}
	
	\section{Numerical results}
	
	We ran the CBC for shift algorithm in weighted unanchored Sobolev
	spaces with product weights $\gamma_j = 1/j^2$, $\gamma_j = 0.9^j$,
	$\gamma_j = 0.75^j$, and $\gamma_j = 0.5^j$, with the number of points
	$N=1024$ and $2048$. We used the lattice generating vectors $\bsz^*$ from
	\cite{Kuo.Y.F_web_lattice}.
	
	Table~\ref{table:1/jsq} shows the values of the indices {$m_s^*$} for {the
		components of the shifts $\Delta_s^* = (2m_s^*-1)/(2N)$ together with the
		values of $\kappa(s,N)$, for the case $N=2048$ and $\gamma_j=1/j^2$. As} a
	comparison, we provide {also the} values of the ratio~\eqref{eq:ratio}
	with $\bsDelta^*$ replaced by the zero shift vector, denoting the new
	ratio  by $\kappa_0(N,s)$. We see that $\kappa(s,N)$ is {less than}
	$1$, whereas $\kappa_0(s,N)$ exceeds $1$.
	
	Table~\ref{table:0.5-to-j} shows the same values {for the case}
	$\gamma_j=0.5^j$. Again, we see that $\kappa(s,N)$ is {less than} $1$,
	whereas $\kappa_0(s,N)$ exceeds $1$. {The same observation holds for
		the other cases that we considered.}
	\begin{table}[H]
		\small
		\centering
		\begin{tabular}{|c|c|c|c|}
			\hline
			$s$ & ${m_s^*}$  & $\kappa(s,2048)$ & $\kappa_0(s,2048)$\\
			\hline
			1 & 1 & 0.708211 & 1.414765 \\
			\hline
			2 & 227 & 0.774829 & 1.242600 \\
			\hline
			3 & 17 & 0.804685 & 1.184076 \\
			\hline
			4 & 1955 & 0.817572 & 1.159945 \\
			\hline
			5 & 1273 & 0.827628 & 1.164227 \\
			\hline
			6 & 1250 & 0.835811 & 1.153188 \\
			\hline
			7 & 1698 & 0.841416 & 1.140436 \\
			\hline
			8 & 1970 & 0.845575 & 1.135741 \\
			\hline
			9 & 476 & 0.847988 & 1.134190 \\
			\hline
			10 & 646 & 0.850682 & 1.130419 \\
			\hline
			11 & 779 & 0.853486 & 1.129294 \\
			\hline
			12 & 1093 & 0.855818 & 1.126409 \\
			\hline
			13 & 1498 & 0.857239 & 1.123408 \\
			\hline
			14 & 550 & 0.859090 & 1.122288 \\
			\hline
			15 & 1218 & 0.860315 & 1.123017 \\
			\hline
			16 & 1124 & 0.861422 & 1.121392 \\
			\hline
			17 & 135 & 0.862420 & 1.120642 \\
			\hline
			18 & 717 & 0.863531 & 1.120035 \\
			\hline
			19 & 854 & 0.864463 & 1.119229 \\
			\hline
			20 & 1634 & 0.865152 & 1.118282 \\
			\hline
			21 & 1692 & 0.865776 & 1.117763 \\
			\hline
			22 & 1002 & 0.866488 & 1.116437 \\
			\hline
			23 & 1034 & 0.866953 & 1.117111 \\
			\hline
			24 & 249 & 0.867514 & 1.117083 \\
			\hline
			25 & 1477 & 0.868110 & 1.116343 \\
			\hline                                              \end{tabular}
		\,
		\begin{tabular}{|c|c|c|c|}
			\hline
			$s$ & ${m_s^*}$  & $\kappa(s,2048)$ & $\kappa_0(s,2048)$\\
			\hline
			26 & 626 & 0.868553 & 1.117042 \\
			\hline
			27 & 1987 & 0.869105 & 1.116248 \\
			\hline
			28 & 1676 & 0.869585 & 1.116454 \\
			\hline
			29 & 1323 & 0.869814 & 1.116148 \\
			\hline
			30 & 1037 & 0.870236 & 1.115625 \\
			\hline
			31 & 416 & 0.870557 & 1.116051 \\
			\hline
			32 & 416 & 0.870557 & 1.116332 \\
			\hline
			33 & 928 & 0.870846 & 1.116119 \\
			\hline
			34 & 928 & 0.870846 & 1.116082 \\
			\hline
			35 & 711 & 0.871200 & 1.115653 \\
			\hline
			36 & 711 & 0.871200 & 1.115341 \\
			\hline
			37 & 1852 & 0.871508 & 1.115202 \\
			\hline
			38 & 1852 & 0.871508 & 1.115458 \\
			\hline
			39 & 785 & 0.871817 & 1.115148 \\
			\hline
			40 & 785 & 0.871817 & 1.115281 \\
			\hline
			41 & 696 & 0.872074 & 1.115050 \\
			\hline
			42 & 1497 & 0.875772 & 1.114796 \\
			\hline
			43 & 1587 & 0.875981 & 1.114618 \\
			\hline
			44 & 638 & 0.876184 & 1.114463 \\
			\hline
			45 & 848 & 0.876350 & 1.114147 \\
			\hline
			46 & 954 & 0.876547 & 1.113879 \\
			\hline
			47 & 1042 & 0.876704 & 1.113629 \\
			\hline
			48 & 20 & 0.876866 & 1.113642 \\
			\hline
			49 & 589 & 0.876988 & 1.113757 \\
			\hline
			50 & 617 & 0.877128 & 1.113769 \\
			\hline
		\end{tabular}
		\caption{Shifts ${\Delta_s^*=(2m_s^*-1)/(2N)}$ and $\kappa(s,N)$
			obtained by the CBC for shift algorithm for $N=2048$, $\gamma_j=1/j^2$;
			and $\kappa_0(s,N)$, the values of $\kappa(s,N)$ {corresponding to zero
				shift}, for $s=1,\dots,50$. We see that $\kappa(s,N)$ is
			{less than} $1$.}
		\label{table:1/jsq}
	\end{table}
	\begin{table}[H]                                      \small
		\centering
		\begin{tabular}{|c|c|c|c|}
			\hline
			s & {$m_s^*$} & $\kappa(s,2048)$ & $\kappa_0(s,2048)$ \\
			\hline
			1 & 1 & 0.708211 & 1.414765 \\
			\hline
			2 & 227 & 0.774829 & 1.242600 \\
			\hline
			3 & 17 & 0.804685 & 1.184076 \\
			\hline
			4 & 1955 & 0.817572 & 1.159945 \\
			\hline
			5 & 422 & 0.829080 & 1.146448 \\
			\hline
			6 & 1698 & 0.836308 & 1.130689 \\
			\hline
			7 & 1917 & 0.841847 & 1.131901 \\
			\hline
			8 & 2005 & 0.845626 & 1.127075 \\
			\hline
			9 & 5 & 0.848370 & 1.121351 \\
			\hline
			10 & 135 & 0.851825 & 1.116117 \\
			\hline
			11 & 1139 & 0.853881 & 1.118105 \\
			\hline
			12 & 1410 & 0.857111 & 1.111780 \\
			\hline
			13 & 982 & 0.859260 & 1.109848 \\
			\hline
			14 & 1151 & 0.860542 & 1.107647 \\
			\hline
			15 & 751 & 0.862129 & 1.104858 \\
			\hline
			16 & 1043 & 0.863598 & 1.102854 \\
			\hline
			17 & 1083 & 0.864755 & 1.107635 \\
			\hline
			18 & 412 & 0.866106 & 1.107113 \\
			\hline
			19 & 211 & 0.867072 & 1.106381 \\
			\hline
			20 & 854 & 0.867879 & 1.105469 \\
			\hline
			21 & 418 & 0.868589 & 1.136709 \\
			\hline
			22 & 849 & 0.869226 & 1.164846 \\
			\hline
			23 & 13 & 0.876862 & 1.197944 \\
			\hline
			24 & 1280 & 0.877108 & 1.197713 \\
			\hline
			25 & 1229 & 0.882508 & 1.217421 \\
			\hline
		\end{tabular}
		\,
		\begin{tabular}{|c|c|c|c|}
			\hline
			s	& {$m_s^*$}  & $\kappa(s,2048)$ & $\kappa_0(s,2048)$ \\
			\hline
			26 & 11 & 0.890243 & 1.237195 \\
			\hline
			27 & 1696 & 0.896959 & 1.253651 \\
			\hline
			28 & 820 & 0.896507 & 1.256813 \\
			\hline
			29 & 1629 & 0.900509 & 1.269280 \\
			\hline
			30 & 1272 & 0.904119 & 1.279908 \\
			\hline
			31 & 1661 & 0.904847 & 1.283008 \\
			\hline
			32 & 633 & 0.909137 & 1.291162 \\
			\hline
			33 & 205 & 0.912924 & 1.298611 \\
			\hline
			34 & 1841 & 0.916205 & 1.305355 \\
			\hline
			35 & 2038 & 0.917085 & 1.307455 \\
			\hline
			36 & 1433 & 0.919874 & 1.312978 \\
			\hline
			37 & 405 & 0.920440 & 1.314927 \\
			\hline
			38 & 1042 & 0.921457 & 1.316980 \\
			\hline
			39 & 589 & 0.922440 & 1.319085 \\
			\hline
			40 & 1068 & 0.924557 & 1.322855 \\
			\hline
			41 & 1763 & 0.927052 & 1.326324 \\
			\hline
			42 & 1364 & 0.929271 & 1.329515 \\
			\hline
			43 & 1946 & 0.931415 & 1.332454 \\
			\hline
			44 & 214 & 0.932025 & 1.333719 \\
			\hline
			45 & 1511 & 0.933809 & 1.336227 \\
			\hline
			46 & 1835 & 0.934393 & 1.337355 \\
			\hline
			47 & 128 & 0.935941 & 1.339516 \\
			\hline
			48 & 1500 & 0.936470 & 1.340523 \\
			\hline
			49 & 1023 & 0.937853 & 1.342399 \\
			\hline
			50 & 561 & 0.939113 & 1.344153 \\
			\hline
		\end{tabular}
		\caption{Shifts ${\Delta_s^*=(2m_s^*-1)/(2N)}$ and $\kappa(s,N)$
			obtained by the CBC for shift algorithm for $N=2048$, $\gamma_j=0.5^j$;
			and $\kappa_0(s,N)$, the values of $\kappa(s,N)$ {corresponding to zero
				shift}, for $s=1,\dots,50$. We see that $\kappa(s,N)$ is
			{less than} $1$.} \label{table:0.5-to-j}
	\end{table}

	\paragraph{Acknowledgements}
	We gratefully acknowledge the financial support from the Australian
	Research Council (DP180101356).

\printbibliography

\Addressesone

\Addressestwo

\Addressesthree
\end{document}